\documentclass{amsart}
\usepackage[margin=1in]{geometry}
\usepackage{mathtools}
\usepackage{amsthm}
\usepackage{amssymb}   
\usepackage{url}

\theoremstyle{plain}
	\newtheorem*{thm}{Theorem}
	\newtheorem{lem}{Lemma}
	\newtheorem*{prp}{Proposition}

\title{Rings with Finitely Many Zero Divisors}
\author{Michael Kinyon}

\begin{document}
\maketitle
\begin{abstract}
	We give an elementary proof of a result which is not as well known as it should be: a ring with a specified finite number of zero divisors is finite, with a precise bound on its order.
\end{abstract}

In a typical introductory abstract algebra class, students are introduced to the concept of a zero divisor fairly soon after the definition of ring. Sometimes they are informally described as the ``worst case scenario'' among elements that are not invertible. Later, integral domains--commutative rings with unity with no zero divisors--are defined, and much of the rest of the course is devoted to integral domains and fields.

There are, however, interesting questions that are rarely raised. For example, what can we say about a ring with exactly one zero divisor? More generally, what can we say about a ring with a specified finite number of zero divisors?

Our goal in this note is to give an elementary proof of the following.

\begin{thm}
	Let $R$ be a ring with exactly $n > 0$ two-sided zero divisors. Then $R$ is finite and $|R|\leq (n+1)^2$. 
\end{thm}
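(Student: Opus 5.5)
The plan is to first show that $R$ is finite by a chain of additive homomorphisms whose images and kernels are forced into $Z\cup\{0\}$. Here $Z$ is the set of $n$ two-sided zero divisors, meaning nonzero $z$ with $xz=0$ and $zy=0$ for some nonzero $x,y$. After that I would sharpen the crude bound to $(n+1)^2$. The one principle used throughout: an element $u\neq 0$ lies in $Z$ as soon as I exhibit nonzero elements killing it on \emph{both} sides. The one-sided annihilators themselves are never counted.

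Fix $a\in Z$, with nonzero $b,c$ satisfying $ab=0$ and $ca=0$. Let
\[
I=\{x\in R: ax=0=xa\},
\]
so every nonzero element of $I$ lies in $Z$, because $a$ kills it on both sides; hence $|I|\le n+1$. Let $L=\{x: xa=0\}$ be the left annihilator of $a$. The additive map $L\to R$, $x\mapsto ax$, has kernel $I$. Its image lies in $Z\cup\{0\}$, since $c(ax)=0$ and $(ax)a=0$. Hence $|L|\le (n+1)^2$.

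Next consider the additive map $R\to R$, $r\mapsto ra$, whose kernel is $L$. Its image $Ra$ consists of elements killed on the right by $b$. For the left side I would consider $r\mapsto ara$: each $ara$ is killed by $c$ on the left and by $b$ on the right, so this map has image of size at most $n+1$. Its kernel $K=\{r: ara=0\}$ maps under $r\mapsto ra$ into $Z\cup\{0\}$, because $a(ra)=0$ and $(ra)b=0$. Combining these homomorphisms, $|R|\le |aRa|\cdot|Ka|\cdot|L|$ is finite. This is already enough for finiteness, with a polynomial bound of degree about $4$ in $n+1$.

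To reach the stated bound $(n+1)^2$, I would use finiteness to upgrade one-sided information to two-sided. Suppose $x\neq0$ satisfies $xy=0$ for some $y\ne0$, and suppose no nonzero $w$ has $wx=0$. Then right multiplication $r\mapsto rx$ is injective on the finite set $R$, hence bijective. So some power $x^k$ acts as a right identity, $rx^k=r$ for all $r$. Multiplying $xy=0$ appropriately should then produce a contradiction, or else show directly that $x$ is a two-sided zero divisor. If this works, every one-sided zero divisor of the finite ring $R$ is two-sided. Then $Z$ coincides with the set of one-sided zero divisors, and the classical Ganesan-type count applies: with $A=\{x: xa=0\}$, both $A$ and $R/A\cong Ra$ have size at most $n+1$, giving $|R|\le (n+1)^2$.

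I expect the upgrading step to be the main obstacle, since $R$ need not have a unity. For instance, $x^k$ acting as a right identity does not immediately contradict $xy=0$. If it fails in general, my fallback is to choose $a$ more cleverly, for example $a\in Z$ with $a^2=0$, or $a$ lying in $aRa$. The aim is to collapse the chain of maps above to just two factors: a kernel inside $I$ and an image inside $Z\cup\{0\}$.
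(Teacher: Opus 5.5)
Your finiteness argument is correct. In the chain $R\supseteq K\supseteq L\supseteq I$, each of $R/K\cong aRa$, $K/L\cong Ka$, $L/I\cong aL$ and the subgroup $I$ embeds in $Z\cup\{0\}$, so $|R|\le (n+1)^4$. The gap is in the sharpening. The upgrade you need, that in a finite ring every left zero divisor is also a right zero divisor, is false without a unity. So no manipulation of $xy=0$ and $rx^k=r$ can give a contradiction. Take the ring $S$ of matrices $\left(\begin{smallmatrix}s&0\\t&0\end{smallmatrix}\right)$ with $s,t\in\mathbb{F}_2$. There $x=\left(\begin{smallmatrix}1&0\\0&0\end{smallmatrix}\right)$ is a right identity, hence not a right zero divisor. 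Yet $xy=0$ for $y=\left(\begin{smallmatrix}0&0\\1&0\end{smallmatrix}\right)\neq 0$. This $S$ has exactly one two-sided zero divisor, namely $y$, and $|S|=4=(1+1)^2$. So the bad case occurs even in extremal rings. Your fallback (``choose $a$ more cleverly'') is not an argument, so as written the proposal proves only $|R|\le (n+1)^4$.

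The missing idea is to use such an element rather than rule it out; this is the paper's case split. Your Ganesan-type count only needs every \emph{left} zero divisor to lie in $Z$, because the nonzero elements of $\{x: xa=0\}$ and of $Ra$ are left zero divisors. So that case is already finished.

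Otherwise, take a left zero divisor $a$ that is not a right zero divisor, with $ab=0$ and $b\neq 0$. By Lemma~\ref{Lem:canc}, $a$ is right cancellable, so $x\mapsto xa$ is a bijection $R\to Ra$, and $ba\neq 0$. Every element of the subring $Ra$ is killed on the right by $ba$, since $(xa)(ba)=x(ab)a=0$. Hence every right zero divisor of $Ra$ lies in $Z$, and $ba$ is itself one, because $(ba)(ba)=0$. Applying the one-sided count of Lemma~\ref{Lem:koh} to $Ra$ gives $|R|=|Ra|\le (n+1)^2$. Concretely, with $s=ba$, both the kernel and the image of $u\mapsto su$ on $Ra$ lie in $Z\cup\{0\}$. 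This case split also makes your separate finiteness step unnecessary.
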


Our conventions are as follows. 
Rings are not necessarily commutative nor do they necessarily have a unity. A nonzero element $a\in R$ is said to be a \emph{left zero divisor} if there exists $0\neq b\in R$ such that $ab=0$. \emph{Right zero divisors} are defined dually and \emph{two-sided zero divisors} are both left and right zero divisors. We use the exclusive convention common to introductory abstract algebra textbooks that $0$ is not a zero divisor. Readers who prefer the inclusive convention should make the necessary adjustments. 

The upper bound given in the theorem is sharp, and is achieved by, for example, the ring $\mathbb{Z}_{p^2} = \{0,1,\ldots,p^2-1\}$ where $p$ is a prime; the zero divisors are $p,2p,\ldots,(p-1)p$. 

N. Ganesan first proved the theorem in the commutative case \cite{Ganesan1964}, and then improved it by showing that a ring with exactly $m>0$ left zero divisors and $n>0$ right zero divisors has order no more than $\min\{(m+1)^2,(n+1)^2\}$ \cite{Ganesan1965}. K. Koh improved this further: a ring with exactly $n>0$ left zero divisors has order no more than $(n+1)^2$ \cite{Koh1967}; this is our Lemma \ref{Lem:koh} below. Koh also showed that equality is achieved precisely when $n+1$ is a power of a prime, that is, $|R| = p^{2j}$ for some $j\geq 1$. Independently, B. Corbas \cite{Corbas1969} and R. Raghavendran \cite{Raghavendran1969} described all the rings where equality in Koh's theorem is achieved.
 
Y. Hirano took the final step and proved the theorem as we have stated it with two-sided zero divisors \cite{Hirano1988}. Since there may be fewer two-sided zero divisors than strictly one-sided ones, Hirano's result is an improvement upon Koh's. However, Hirano's proof relies upon Koh's result, and we will make a similar reduction in our proof.

None of the proofs in the cited papers are difficult, but they do all assume some understanding of (one-sided) ideals and quotients. By contrast, the proof of the following lemma (Koh's theorem) could be presented early in a ring theory course, shortly after the definition of left zero divisor. We will need both the left- and right-sided cases in our proof of the theorem.

An instructor of an introductory ring theory course might wish to limit the theorem to the commutative case. If so, then the proof of Lemma \ref{Lem:koh} (with the one-sidedness dropped) will suffice.

\begin{lem}\label{Lem:koh}
	Let $R$ be a ring with exactly $n > 0$ left [right] zero divisors, then $|R|\leq (n+1)^2$.
\end{lem}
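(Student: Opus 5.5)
The plan is a counting argument built from one fixed pair $a,b$, using only additive subgroups and a single additive homomorphism, so no ideals or quotient rings are needed. I treat the left case; the right case follows by the mirror-image argument, or by passing to the opposite ring.

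Since $n>0$, choose a left zero divisor $a$ and an element $b\neq 0$ with $ab=0$. I will look at two subsets of $R$.

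First, let $L=\{x\in R : xb=0\}$. This is an additive subgroup of $R$. Every nonzero $x\in L$ satisfies $xb=0$ with $b\neq 0$, so $x$ is a left zero divisor. Hence $L\setminus\{0\}$ is contained in the set of left zero divisors, and $|L|\le n+1$.

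Second, consider the map $\varphi\colon R\to R$ given by $\varphi(r)=ra$. It is additive. Its image lies in $L$, because $(ra)b=r(ab)=0$. Its kernel is $K=\{r\in R: ra=0\}$. Every nonzero $r\in K$ satisfies $ra=0$ with $a\neq 0$ (zero divisors are nonzero by convention), so $r$ is a left zero divisor. Hence $|K|\le n+1$ as well.

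To conclude, apply the first isomorphism theorem for abelian groups, or just count cosets. The group $R/K$ is isomorphic to $\varphi(R)\subseteq L$, so $R$ is a union of at most $|L|$ cosets of $K$. Each coset has $|K|$ elements. This shows at once that $R$ is finite and that
\[
|R| = |K|\,|\varphi(R)| \le |K|\,|L| \le (n+1)^2 .
\]

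The only real obstacle is the second step: finding two subgroups, each controlled by the zero divisors, that fit together to cover $R$. The key observation that makes it work is that $Ra\subseteq L$ because of associativity and $ab=0$.

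For the right-sided version, start instead from a right zero divisor $a$ with $ba=0$ for some $b\neq 0$. Use $L'=\{x : bx=0\}$ and the map $r\mapsto ar$, whose image lies in $L'$ and whose kernel is $\{r : ar=0\}$; the same bound follows.
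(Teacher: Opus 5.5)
Your proof is correct and is essentially the paper's argument. The paper also fixes $c$ with $cd=0$, notes that every $xc$ lies in $\{0,z_1,\ldots,z_n\}$ because $(xc)d=0$, and bounds each fiber of $x\mapsto xc$ by the size of $A_0=\{x: xc=0\}$, which is at most $n+1$. The difference is cosmetic: the paper's written proof uses pigeonhole and the differences $x_j-x_1$ instead of the first isomorphism theorem, and your coset-counting version is exactly the alternative (closer to Koh's original) that the paper sketches right after its proof.
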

\begin{proof}
	Let $z_1,\ldots,z_n$ be the $n$ left zero divisors. Set $c\coloneq z_1$ and 
	choose $0\neq d\in R$ such that $cd = 0$. For each $x\in R$, $(xc)d = x(cd) = 0$, and thus $xc = 0$ or $xc = z_i$ for some $i = 1,\ldots,n$. Now let $A_0 = \{x\in R\,:\,xc=0\}$ and for each $i= 1,\ldots,n$, let $A_i = \{x\in R\,:\,xc=z_i\}$. Some $A_i$'s might be empty, but the nonempty sets in the family $\{A_0,A_1,\ldots,A_{n-1}\}$ form a partition of $R$.
	
	Now suppose $|R|\geq (n+1)^2 + 1 = n(n+1) + (n+2)$. By the pigeonhole principle, some $A_i$ has at least $n+2$ elements, say, $x_1,\ldots,x_{n+2}$. Then the $n+1$ elements $x_2 - x_1,\ldots,x_{n+2} - x_1$ are nonzero and distinct, and satisfy $(x_j - x_1)c = 0$ for each $2\leq j\leq n+2$. But this contradicts our assumption that there are only $n$ left zero divisors. Therefore $|R|\leq (n+1)^2$.
\end{proof}

While the proof of Lemma \ref{Lem:koh} gets to the desired upper bound as quickly as possible, the reader familiar with the basics of ring theory will see what is really going on with the sets $A_i$. The set $A_0$, called the \emph{left annihilator} of $a$, is a left ideal of $R$. For $i>0$, the nonempty $A_i$'s are additive cosets of $A_0$; this is what the differences $x_j-x_1$ in the proof essentially show. Thus for each $0\leq i\leq n$, $|A_i| = 0$ or $|A_i| = |A_0|$. Since $A_0$ consists of $0$ and left zero divisors, $|A_0|\leq n+1$, and therefore $|R| = \sum_{i=0}^n|A_i| \leq (n+1)^2$. In fact, this alternative argument does not really need the fact that $A_0$ is a left ideal, only that it is an additive subgroup. If a ring theory class is already familiar with abelian groups and their cosets, the instructor might wish to replace the second paragraph of the lemma's proof with some of the above considerations. This is much closer to Koh's original proof \cite{Koh1967}.

For the theorem itself, there is one additional fact we need.

\begin{lem}\label{Lem:canc}
	Let $R$ be a ring and let $0\neq a\in R$. Then $a$ is not a right [left] zero divisor if and only if $a$ is right [left] cancellable, that is, for all $x,y, xa=ya\implies x=y$ [$ax=ay\implies x=y$].
\end{lem}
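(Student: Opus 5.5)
The plan is to prove the right-sided version of Lemma~\ref{Lem:canc} directly from the definitions; the left-sided version follows by the mirror-image argument, exchanging the order of every product. The only tool needed is distributivity: $xa - ya = (x-y)a$ for all $x,y\in R$. This holds in any ring, with or without unity or commutativity.

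First, suppose $a\neq 0$ is not a right zero divisor, and let $xa = ya$. Then $(x-y)a = xa - ya = 0$. If $x-y\neq 0$, this equation exhibits $a$ as a right zero divisor, which we excluded. Therefore $x - y = 0$, so $x=y$, and $a$ is right cancellable.

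Conversely, suppose $a\neq 0$ is right cancellable, and let $b\in R$ satisfy $ba = 0$. Since $0\cdot a = 0$ in any ring, we have $ba = 0\cdot a$. Cancellation then gives $b = 0$. So no nonzero $b$ has $ba = 0$, and $a$ is not a right zero divisor.

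No step here is a real obstacle. The one point to watch is that $0\cdot a = 0$ is derived from distributivity ($0a = (0+0)a = 0a+0a$), not from a unity, so the argument is valid under the paper's convention that rings need not have a $1$. The hypothesis $a\neq 0$ only ensures that $a$ is even eligible to be called a zero divisor under the exclusive convention.
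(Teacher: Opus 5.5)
Your proof is correct and is the argument the paper intends. The paper gives no written proof, only noting that the lemma is an immediate consequence of the definition of right zero divisor and the right distributive law, and that is exactly what you use via $(x-y)a = xa - ya$ and $0\cdot a = 0$. One small correction: the hypothesis $a\neq 0$ does real work in your forward direction, not just bookkeeping, since $0$ is never right cancellable in a nonzero ring even though it is excluded from being a zero divisor.
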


\noindent Lemma \ref{Lem:canc} is an immediate consequence of the definition of right zero divisor and the right distributive law. In a typical introductory ring theory course, this is either proven immediately after the definition of zero divisor or is left as an exercise.

\begin{proof}[Proof of the Theorem]
Let $z_1,\ldots,z_n$ be the $n$ two-sided zero divisors. If every left zero divisor of $R$ is a two-sided divisor, then there are exactly $n$ left zero divisors, so Lemma \ref{Lem:koh} applies. Otherwise, $R$ has a left zero divisor $a$ that is not a right zero divisor. Choose $0\neq b\in R$ such that $ab=0$. Now for each $i$, $z_i a$ is a two-sided zero divisor. By Lemma \ref{Lem:canc}, $a$ is right cancellable, thus $z_1 a,\ldots z_n a$ are all distinct. Thus the subring $Ra$ contains all $n$ two-sided zero divisors. In addition, any right zero divisor $xa$ of $Ra$ is, in fact, a two-sided zero divisor since $(xa)(ba) = 0$. Thus $\{z_1 a,\ldots,z_n a\}$ is the set of all right zero divisors of $Ra$. Applying Lemma \ref{Lem:koh}, $|Ra|\leq (n+1)^2$. The element $a$ being right cancellable means that the mapping $R\to Ra; x\mapsto xa$ is injective, while it is also obviously surjective. Therefore $|R| = |Ra|\leq (n+1)^2$.
\end{proof}	

The rarity of finite noncommutative rings with unity \cite{oeis} leads to our final result.

\begin{prp}
	A ring with unity with exactly $1$ or exactly $2$ two-sided zero divisors is commutative.
\end{prp}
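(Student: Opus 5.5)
The plan is to combine the Theorem with the structure of \emph{finite} rings with unity. By the Theorem, a ring $R$ with unity and exactly $n\in\{1,2\}$ two-sided zero divisors is finite, with $|R|\leq 4$ or $|R|\leq 9$. Two small facts about finite rings with unity will be used.

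\emph{First fact.} Suppose $a\neq 0$ is a non-unit. Then $x\mapsto ax$ and $x\mapsto xa$ cannot both be injective. Indeed, if $x\mapsto ax$ were injective, finiteness would make it bijective, so $ab=1$ for some $b$. Then $x\mapsto bx$ is injective, hence also bijective, so $cb=1$ for some $c$, and then $a=c$ is a unit.

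We want every nonzero non-unit to be a \emph{two-sided} zero divisor. If $x\mapsto ax$ fails to be injective, then $a$ is a left zero divisor. I expect the symmetric argument, using Lemma \ref{Lem:canc}, to make $a$ a right zero divisor as well. For example, if $a$ were right cancellable, then $x\mapsto xa$ would be bijective, giving $ca=1$, and the same inverse argument would make $a$ a unit.

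Consequently the nonzero non-units are exactly the $n$ two-sided zero divisors, and the unit group has order $|R|-1-n$.

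\emph{Second fact.} A ring with unity whose additive group is generated by $1$ together with at most one further element is commutative. Such a ring has the form $\{k\cdot 1 + \ell a\}$, and $a$ commutes with itself and with $1$. This covers every order $\leq 9$ except $8$:
\begin{itemize}
\item orders $p$: $R=\langle 1\rangle$;
\item orders $p^2$: either $(R,+)$ is cyclic, generated by $1$, or $R$ is a $2$-dimensional $\mathbb{F}_p$-algebra with basis $1,a$;
\item order $6$: the additive group is cyclic.
\end{itemize}
This already settles $n=1$, since then $|R|\leq 4$.

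For $n=2$ only $|R|=8$ remains. If $(R,+)$ is $\mathbb{Z}_8$ or $\mathbb{Z}_4\times\mathbb{Z}_2$, then again $R$ is additively generated by $1$ and one other element. In the second case $1$ has additive order $4$, since its additive order is the characteristic and the exponent of $(R,+)$ is $4$. So $R$ is commutative by the second fact.

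The main case, and the step I expect to carry the real content, is $(R,+)\cong\mathbb{F}_2^3$. Here $R$ is a $3$-dimensional $\mathbb{F}_2$-algebra with exactly $8-1-2=5$ units. The unit group then has prime order $5$, so it contains a unit $u\neq 1$ with $u^5=1$.

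Consider the commutative subalgebra $\mathbb{F}_2[u]$. Its dimension, which is the degree of the minimal polynomial of $u$, is at most $3$. That minimal polynomial divides
\[
x^5-1=(x+1)(x^4+x^3+x^2+x+1),
\]
and the quartic factor is irreducible over $\mathbb{F}_2$ because $2$ has order $4$ modulo $5$. The only divisor of degree at most $3$ is $x+1$, which forces $u=1$, a contradiction. Hence this case cannot occur at all, and every ring in the Proposition is commutative.

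If a more elementary route is wanted in place of this minimal-polynomial step, I would instead count units directly. One can identify a noncommutative unital ring of order $8$ with the upper triangular $2\times 2$ matrices over $\mathbb{F}_2$, which has only $2$ units, hence $5$ zero divisors rather than $2$. That identification, however, requires more justification than the argument above.
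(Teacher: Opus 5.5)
Your proof is correct, but it takes a genuinely different route from the paper.

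\textbf{The paper's route.} It invokes the known enumeration of small rings (via the OEIS reference). The only noncommutative ring with unity of order at most $9$ is the ring of $2\times 2$ upper triangular matrices over $\mathbb{F}_2$, and a direct count shows it has $5$ two-sided zero divisors.

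\textbf{Your route.} You prove what is needed from scratch, in two steps.
\begin{itemize}
\item The observation that a ring of the form $\{k\cdot 1+\ell a\}$ is commutative handles every order other than $8$. It also handles order $8$ with additive group $\mathbb{Z}_8$ or $\mathbb{Z}_4\times\mathbb{Z}_2$.
\item For $(R,+)\cong\mathbb{F}_2^3$, you use the fact that in a finite ring with unity the nonzero non-units are exactly the two-sided zero divisors. This forces a unit group of order $5$. The minimal-polynomial argument then shows this is impossible in a $3$-dimensional $\mathbb{F}_2$-algebra, since $x^4+x^3+x^2+x+1$ is irreducible over $\mathbb{F}_2$.
\end{itemize}

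\textbf{Comparison.} The paper's proof is a two-line appeal to an external classification that it does not prove. Yours is self-contained, at the cost of some linear algebra and minimal polynomials over $\mathbb{F}_2$. It never needs to identify the noncommutative ring of order $8$ or count its zero divisors. It also yields the stronger fact that no ring with unity of order $8$ and characteristic $2$ has exactly two two-sided zero divisors.

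\textbf{Two small slips to fix.}
\begin{itemize}
\item Your argument shows that for a nonzero non-unit $a$, \emph{neither} $x\mapsto ax$ nor $x\mapsto xa$ is injective. That is what you actually use, and it is stronger than the statement that they ``cannot both be'' injective.
\item Surjectivity of $x\mapsto bx$ gives $c$ with $bc=1$, not $cb=1$. From $bc=1$ you get $a=a(bc)=(ab)c=c$. Alternatively, $a(ba-1)=aba-a=0$ together with injectivity of $x\mapsto ax$ gives $ba=1$ directly.
\end{itemize}
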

\begin{proof}
	Up to isomorphism, there is only one noncommutative ring with unity of order less than or equal to $9$, namely the ring of order $8$ consisting of $2\times 2$ upper triangular matrices with entries in the field $\mathbb{F}_2$. By direct calculation, that ring turns out to have $5$ two-sided zero divisors.
\end{proof}

\noindent\emph{Acknowledgment.} I would like to thank Des MacHale for some very useful email conversations, and I thank Gleb Pogudin for catching typos. In the interest of full disclosure, I should also mention that I was inspired to write this note by a \textsc{Quora} answer posted by computer scientist David Ash in response to a question asking if there are noncommutative rings with unity with exactly two right zero divisors \cite{Ash}. It was not clear if the asker was using the inclusive or exclusive convention, so Ash gave weak bounds both for $n=1$ ($|R|\leq 7$) and $2$ ($|R|\leq 12$) to answer the question. Ash's combinatorial approach motivated my proof of Lemma \ref{Lem:koh}.

\end{document}